\def\C{\mathbb{C}}%
\def\Z{\mathbb{Z}}%
\def\T{\mathbb{T}}%
\def\D{\mathbb{D}}%
\def\1{\bf 1}%
\DeclareMathOperator{\id}{id}
\newtheorem{thm}{Theorem}[section]
\newtheorem{cor}[thm]{Corollary}
\newtheorem{lem}[thm]{Lemma}
\theoremstyle{proposition}
\newtheorem{prop}[thm]{Proposition}
\theoremstyle{definition}
\theoremstyle{remark}
\newtheorem{rem}[thm]{Remark}
\begin{document}
\title{Toeplitz operators on backward shift invariant subspaces of $H^p$}
\author{Maria Nowak and Andrzej So{\l}tysiak}

\address{Maria  Nowak,  \newline Institute of Mathematics,
\newline Maria Curie-Sk{\l}odowska University, \newline pl. M.
Curie-Sk{\l}odowskiej 1, \newline 20-031 Lublin, Poland}
\email{mt.nowak@poczta.umcs.lublin.pl}
\address{
Andrzej So{\l}tysiak  \newline Faculty of Mathematics and Computer
Science,
\newline Adam Mickiewicz University, ul. Umultowska 87, \newline 61-614 Pozna\'n, Poland}
\email{asoltys@amu.edu.pl}

\subjclass[2010]{47B38, 30H10}
 \keywords{Hardy space, backward shift invariant subspace, Toeplitz operator, corona pair}

\begin{abstract}

We extend  results on compressed Toeplitz operators on  the backward
shift invariant subspaces of $H^2 $ to the context of the spaces
$H^p$, $1<p<\infty.$
\end{abstract}

\maketitle

\section{Introduction}
Let $\D$ denote the unit disc  in the complex plane and let $\T$ be
its boundary. For $1\leqslant p< \infty$  let $H^p$ be the 
Hardy space of functions in $L^p({\T})$ with vanishing negative
Fourier coefficients. $H^p$ consists of the boundary values of
functions holomorphic in $\D$ and satisfying
\[
\sup\limits_{0<r<1}\frac{1}{2\pi}\int_0^{2\pi} \vert
f(re^{i\theta})\vert ^p\,d\theta<\infty.
\]
Let $H^\infty$ denote the Banach space of all bounded holomorphic
functions in $\D$ equipped with the usual supremum norm.
 We will write $H_0^p$ for the subspace of
$H^p$ consisting of functions vanishing at zero and $\overline{H^p}$
for the subspace of the complex conjugates of functions in $H^p$.

Let $S^\ast$ denote the backward shift operator on $H^p$ defined in
the unit disc $\D$ by
\[
(S^\ast f)(z)=\frac{f(z)-f(0)}z.
\]
A closed subspace $\mathcal{M}$ of $H^p$ is called backward shift
invariant,  or $ S^\ast$- invariant, if $f\in{\mathcal M}$ implies
$S^\ast f\in{\mathcal M}$. It is well known that  for  $1<p<\infty$,
all $S^\ast$-invariant subspaces are of the form
\[
\mathcal{M}=K_I^p=H^p\cap I\overline{H_0^p}=(IH^q)^\bot
\]
for some inner function $I$, where $(IH^q)^\bot$ is the annihilator
of the space $IH^q$ and $\frac 1p+\frac 1q=1$ (see \cite[pp.
81--83]{CR}).

For $p=2$ the space $K_I=K_I^2$ is the orthogonal complement of the
shift invariant subspace $IH^2$ and is called a model space. The
model spaces play an important role in the model theory for Hilbert
space contractions (\cite{N3},\cite{SzNB}). In this paper we deal
mainly with $K^p_I$, $1<p<\infty$, and Toeplitz operators on these
spaces. Recently these spaces have been studied e.g. in \cite{Dy1},
\cite {Dy2}, \cite {HS}. The theory of model spaces is based on
Hilbert space methods that are not always easily transferred to the
context of general $H^p$ spaces. Often, however, results in $H^p$
spaces are suggested  by those  in $H^2$.

In Section 3 we prove that the Toeplitz operator $T_{\bar{a}}$ with
the bounded co-analytic symbol $\bar{a}$ restricted to $K^p_I$ is
invertible if and only if the functions $a$ and $I$ form a corona
pair.  This extends the result  obtained by
 P.\,A. Fuhrmann  for $p=2$ (\cite {F1}, \cite {F2} , see also \cite[pp. 17--19] {P}) to  $1<p<\infty.$

In Section 4 we refer to  the commutant of the compressed shift, or,
equivalently,  the commutant  of the restricted backward shift,
investigated by Sarason \cite{S1} in 1967. The description of the
commutant of $S^\ast$ restricted to $K^p_I$ was
  conjectured  in  \cite[p.111]{CR}. Here we give  its proof.

Our  proofs are based  upon  ideas similar to those for the case
 $p=2$.  However,  we   apply neither the functional calculus, nor the
commutant lifting theorem. The main tools we use in our reasoning
are the bounded projection $P_I$ of $H^p$ onto $K^p_I$ and the
representation of linear functionals on $H^p$.

\section{Backward shift invariant spaces}
Let us recall that a closed subspace $\mathcal M$ of a Banach space
$X$ is complemented if there exists a closed subspace $\mathcal{N}$
of $X$ such that $X=\mathcal{M}\oplus\mathcal{N}$, i.e.
$X=\mathcal{M}+\mathcal{N}$ and $\mathcal{M}\cap \mathcal{N}=\{0\}$.

For $1<p<\infty$, the M. Riesz theorem implies that $H^p$ is a
complemented subspace of $L^p({\T})$ and
$H^p\oplus\overline{H^p_0}=L^p({\T})$. Moreover, the Riesz
projection $P_+$ defined by
\[
(P_{+}f)(z)=\frac{1}{2\pi}\int_0^{2\pi}\frac{f(e^{i\theta})}{1-e^{-i\theta}z}\,d\theta
\]
maps $L^p({\T})$ onto $H^p$. We also set $P_{-}=\id-P_{+}$, where $\id$ denotes the identity operator on $L^p(\T)$. 

Now we  show the following

\begin{prop}
For $1<p<\infty$ and any inner function $I$, the space $K_I^p$ is a
complemented subspace of $H^p$.
\end{prop}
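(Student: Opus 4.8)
The plan is to construct an explicit bounded projection from $H^p$ onto $K_I^p$. The key observation is that I have the direct sum decomposition $H^p \oplus \overline{H_0^p} = L^p(\T)$ at my disposal, together with the bounded Riesz projection $P_+$. Since $K_I^p = H^p \cap I\overline{H_0^p}$, a natural candidate for the complementing projection arises from multiplying by $I$ and applying $P_+$ appropriately. Let me think about what operator should work.

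The idea is to use the fact that $f \in K_I^p$ iff $f \in H^p$ and $\bar I f \in \overline{H_0^p}$, equivalently $\overline{\bar I f} = I \bar f \in H_0^p$... let me instead work directly. For $f \in H^p$, write $\bar I f \in L^p(\T)$ and decompose it using $P_+$ and $P_-$. The operator I would try is $P_I = I P_- \bar I$ restricted to $H^p$, or more precisely $P_I f = I\, P_-(\bar I f)$. First I would check $P_I$ maps $H^p$ into $K_I^p$: since $\bar I f \in L^p$, we have $P_-(\bar I f) \in \overline{H_0^p}$, so $I P_-(\bar I f) \in I\overline{H_0^p}$; I must separately verify it lands in $H^p$, which follows because $P_I f = f - I P_+(\bar I f)$ and $I P_+(\bar I f) \in H^p$ (a product of an inner function with an $H^p$ function), while $f \in H^p$. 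Boundedness of $P_I$ is immediate from boundedness of $P_-$ on $L^p(\T)$ for $1<p<\infty$ (the M.\ Riesz theorem) together with $|I|=1$ a.e.\ on $\T$.

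Next I would verify that $P_I$ is a projection, i.e.\ $P_I f = f$ for $f \in K_I^p$. If $f \in K_I^p = H^p \cap I\overline{H_0^p}$, then $\bar I f \in \overline{H_0^p}$, so $P_-(\bar I f) = \bar I f$ and hence $P_I f = I \bar I f = f$. Thus $P_I^2 = P_I$ and its range is exactly $K_I^p$. The complementary subspace is then $\mathcal{N} = \ker P_I = (\id - P_I)(H^p)$, which from the identity above equals $\{I P_+(\bar I f) : f \in H^p\} = I H^p \cap H^p = I H^p$, giving the clean decomposition $H^p = K_I^p \oplus I H^p$.

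I expect the main obstacle to be the verification that $P_I$ genuinely maps into $H^p$, not merely into $L^p(\T)$: a priori $I P_-(\bar I f)$ is only an $L^p$ function, and one must check its negative Fourier coefficients vanish. The cleanest route is the rewriting $P_I f = f - I P_+(\bar I f)$, since each term on the right is manifestly in $H^p$; this step quietly uses that $I$ is inner so that $I g \in H^p$ whenever $g \in H^p$. Once boundedness, idempotency, and the correct range are in place, the definition of a complemented subspace is satisfied and the proposition follows.
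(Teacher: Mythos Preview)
Your proof is correct and follows essentially the same approach as the paper: both construct the bounded projection $P_I f = I P_-(\bar I f)$ from the decomposition $f = IP_+(\bar I f) + IP_-(\bar I f)$. Your write-up is in fact more thorough, explicitly verifying that $P_I$ maps into $H^p\cap I\overline{H_0^p}$, that it fixes $K_I^p$, and identifying the complement as $IH^p$ (which the paper defers to a separate remark).
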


\begin{proof}
In view of the known characterization of complemented subspaces (see
\cite{R}, p. 126) it is enough to show that there exists a
continuous projection $P_I$ from $H^p$ onto $K^p_I$. To this end
note that for $f\in H^p$,
\[
\bar{I}f= P_{+}(\bar{I}f)+P_{-}(\bar{I}f).
\]
Since
\begin{equation}
f= I\bar{I}f= IP_{+}(\bar{I}f)+IP_{-}(\bar{I}f)\label{jeden}
\end{equation}
and
\[
P_{+}(\bar{I}f)\cap P_{-}(\bar{I}f)=\{0\},
\]
we get
\begin{equation}
P_If=IP_{-}(\bar{I}f). \label{projection}
\end{equation}
\end{proof}
\begin{rem}
It was proved in \cite[Thm. 2.2]{St} that if $I$ is an inner
function, then the operator $T_{\bar{I}}$ is bounded on $H^1$ if and
only if $I$ is a finite Blaschke product. Consequently, the
projection $P_I$ is bounded only for such inner functions $I$. Hence
Proposition 2.1 is not generally  true for $p=1$.

 It is well known that
for $1<p<\infty$ the dual space $(H^p)^\ast$ can be identified with
$H^q$, $\frac 1p+\frac 1q= 1$, via the pairing
\begin{equation}
\langle f, g\rangle=\frac
1{2\pi}\int_0^{2\pi}f(e^{i\theta})\overline{g(e^{i\theta})}\,d\theta\qquad
(f\in H^p,\  g\in H^q).\label{pairing}
\end{equation}
Clearly, the definition of  $\langle f, g\rangle$ can be extended
for $f\in L^p({\T})$ and $g\in L^q ({\T})$. In the sequel we often
use this symbol  for such functions.

\end{rem}
\begin{rem}
It is worth-while to notice that for $1<p<\infty$ the following
equality holds true
\[
H^p=IH^p\oplus K_I^p.
\]
In view of (\ref{jeden}) it is enough to observe that if  $f\in
H^p$, then $IP_{-}(\bar{I}f)\in (IH^q)^{\bot}$. Indeed, for any
$h\in H^q$ we have
\[
\langle Ih, IP_{-}(\bar{I}f)\rangle=\langle h,
P_{-}(\bar{I}f)\rangle=0.
\]

\end{rem}
\section{Restricted Toeplitz operators}
For $\varphi\in L^\infty({\T})$  the Toeplitz operator $T_\varphi$
on $H^p$, $1< p<\infty$,  is defined by
\[
T_\varphi(f)=P_{+}(\varphi f).
\]

It is easy to check that if $\varphi\in H^\infty$ is outer, then
both the operators $T_\varphi$ and $T_{\bar{\varphi}}$ are
injective. Moreover, we have
\begin{prop}
For $\varphi\in H^\infty$, the operator $T_{\bar{\varphi}}$ is
surjective if and only if $T_\varphi$ is left invertible.
\end{prop}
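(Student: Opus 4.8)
The plan is to recognize $T_{\bar\varphi}$ as the Banach-space adjoint of $T_\varphi$ with respect to the duality $(H^p)^\ast=H^q$ furnished by (\ref{pairing}), to read off a ``bounded below'' criterion from the closed range theorem, and finally to upgrade ``bounded below'' to ``left invertible'' by hand. This last upgrade is where the real content lies; everything before it is formal.

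First I would establish the adjoint identity
\[
\langle T_\varphi f,g\rangle=\langle f,T_{\bar\varphi}g\rangle\qquad (f\in H^p,\ g\in H^q).
\]
Since $T_\varphi f=P_{+}(\varphi f)$ and $P_{-}(\varphi f)\in\overline{H^p_0}$, which annihilates $H^q$ under the pairing (the zeroth Fourier coefficient of the relevant product vanishes), one gets $\langle T_\varphi f,g\rangle=\langle\varphi f,g\rangle$. The symmetric computation, with the roles of $p$ and $q$ interchanged, gives $\langle f,T_{\bar\varphi}g\rangle=\langle f,\bar\varphi g\rangle$, and the two agree because $\langle\varphi f,g\rangle=\frac1{2\pi}\int_0^{2\pi}\varphi f\,\bar g\,d\theta=\langle f,\bar\varphi g\rangle$. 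Hence $T_{\bar\varphi}=(T_\varphi)^\ast$.

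Next I would invoke the classical closed range theorem: a bounded operator $T$ between Banach spaces is bounded below (injective with closed range) if and only if $T^\ast$ is surjective. Applied to $T=T_\varphi$ this gives at once
\[
T_\varphi\ \text{bounded below}\ \Longleftrightarrow\ T_{\bar\varphi}\ \text{surjective}.
\]
A left invertible operator is always bounded below, so one implication of the proposition is free; the task is to show that for these symbols ``bounded below'' already forces ``left invertible''. This is the step I expect to be the main obstacle, because in a general Banach space a bounded-below operator need not have a complemented range, and so need not be left invertible.

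To close the gap I would use the inner--outer factorization $\varphi=IO$. On $H^p$ the operator $T_\varphi$ is just multiplication by $\varphi$, so $T_\varphi=M_I M_O$ with $M_I$ an isometry; thus $T_\varphi$ is bounded below exactly when $M_O$ is. Because $O$ is outer, $OH^p$ is dense in $H^p$, and being bounded below makes its range closed, so $OH^p=H^p$ and therefore $\operatorname{ran}T_\varphi=\varphi H^p=IOH^p=IH^p$. By the decomposition $H^p=IH^p\oplus K^p_I$ recalled in Section~2, this range is complemented, the complementary projection $\id-P_I$ being bounded by (\ref{projection}). Consequently $T_\varphi\colon H^p\to IH^p$ is a bounded bijection with bounded inverse $R$, and $L=R(\id-P_I)$ is a bounded left inverse of $T_\varphi$. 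Combined with the closed range equivalence this yields both directions. (One further observes that the condition so obtained, $1/O\in H^\infty$, is independent of the exponent, since the multiplier algebra of $H^p$ is $H^\infty$ for every $1<p<\infty$; this reconciles the fact that the adjoint $T_{\bar\varphi}$ nominally acts on $H^q$ with the single-space formulation of the statement.)
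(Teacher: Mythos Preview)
Your argument is correct. Both you and the paper hinge on the complementedness result of Section~2 together with the inner--outer factorization $\varphi=I\varphi_0$, but from dual sides. The paper stays with $T_{\bar\varphi}$: it identifies $\ker T_{\bar\varphi}=K_I^p$, invokes the general Banach-space fact (cited from M\"uller) that a surjection is right invertible precisely when its kernel is complemented, and leaves the passage from ``$T_{\bar\varphi}$ right invertible'' to ``$T_\varphi$ left invertible'' to duality. You work instead with $T_\varphi$: after the closed range theorem reduces matters to upgrading ``bounded below'' to ``left invertible'', you show $\operatorname{ran}T_\varphi=IH^p$ and complement the \emph{range} via $H^p=IH^p\oplus K_I^p$. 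Your route is longer but self-contained---no appeal to an external one-sided-invertibility criterion---and your closing parenthetical on $p$-independence makes explicit a step (the adjoint a priori lands on $H^q$, not $H^p$) that the paper's two-line proof also requires but does not spell out.
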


\begin{proof}
If $\varphi=I\varphi_0$ with $I$ the inner factor and $\varphi_0$
the outer factor  of $\varphi$, then $\ker T_{\bar{\varphi}}=K^p_I$.
Since $K_I^p$ is a complemented subspace of $H^p$, surjectivity of
$T_{\bar{\varphi}}$ is equivalent to right invertibility of
$T_{\bar{\varphi}}$ (see \cite[p. 92, Thm. 16]{M}).
\end{proof}

For any $a\in H^\infty$ we have $T_{\bar{a}}(K^p_I)\subset K^p_I$.
To see this inclusion it is enough to observe that for $f\in K^p_I$
and $g\in H^q$,
\[
\langle T_{\bar{a}}f, Ig\rangle=\langle \bar{a}f, Ig\rangle =\langle
f,aIg \rangle=0,
\]
since $K^p_I=(IH^q)^\bot$.

Two functions $f,g\in H^\infty$ are said to form a corona pair if
\[
\inf\{\vert f(z)\vert+\vert g(z)\vert\colon\,z\in\D\}=\delta>0.
\]
The next theorem is a generalization of a result due to Fuhrmann for
$p=2$ (\cite{F1}, \cite[Thm. 2.7]{P}).

\begin{thm}
Assume that $1<p<\infty$,  $a\in H^\infty$, and $I$ is an inner
function. Then the restriction of the operator $T_{\bar{a}}$ to
$K^p_I$ is invertible  if and only if the functions $a$ and $I$ form
a corona pair.
\end{thm}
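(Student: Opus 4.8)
The plan is to establish the two implications separately, handling sufficiency by an algebraic Bezout argument and necessity by a test-function estimate. For sufficiency, I would first invoke Carleson's corona theorem to convert the hypothesis that $a$ and $I$ form a corona pair into a Bezout identity $au+Iv=1$ with $u,v\in H^\infty$, and then claim that $T_{\bar u}$ restricted to $K_I^p$ is the inverse of $T_{\bar a}|_{K_I^p}$. The computational engine is the pair of Toeplitz product rules valid on $H^p$ for $1<p<\infty$, namely $T_{\bar w}T_\psi=T_{\bar w\psi}$ when $w\in H^\infty$ and $T_\varphi T_\psi=T_{\varphi\psi}$ when $\psi\in H^\infty$; both reduce to the fact that $P_+$ fixes $H^p$ and annihilates $\overline{H_0^p}$ on $L^p(\T)$. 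Applying the first rule gives $T_{\bar u}T_{\bar a}=T_{\bar u\bar a}=T_{\overline{ua}}=\id-T_{\overline{Iv}}$, and symmetrically $T_{\bar a}T_{\bar u}=\id-T_{\overline{Iv}}$. It then remains only to note that $T_{\overline{Iv}}$ vanishes on $K_I^p$: if $f\in K_I^p$ then $\bar I f\in\overline{H_0^p}$, so $\overline{Iv}\,f=\bar v(\bar I f)\in\overline{H_0^p}$ and $P_+$ kills it. Since $T_{\bar u}$ maps $K_I^p$ into itself (by the inclusion $T_{\bar a}(K_I^p)\subset K_I^p$ already verified for arbitrary $H^\infty$ symbols), $T_{\bar u}|_{K_I^p}$ is a genuine two-sided inverse.

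For necessity I would argue by contraposition: assuming $a$ and $I$ are not a corona pair, I produce unit vectors on which $T_{\bar a}$ is arbitrarily small, so the operator fails to be bounded below and hence fails to be invertible. The test functions are the bounded kernels $k_\lambda(z)=\frac{1-\overline{I(\lambda)}I(z)}{1-\bar\lambda z}$, which lie in $K_I^p=(IH^q)^\bot$ for every $\lambda\in\D$ (they are bounded and annihilate $IH^q$ under the pairing) and decompose as $k_\lambda=k_\lambda^0-\overline{I(\lambda)}\,I k_\lambda^0$ with $k_\lambda^0(z)=(1-\bar\lambda z)^{-1}$. Using the eigenrelation $T_{\bar a}k_\lambda^0=\overline{a(\lambda)}\,k_\lambda^0$ (valid in $H^p$ by testing against $H^q$ and the reproducing property of $k_\lambda^0$) together with the crude bound $\|T_{\bar a}(Ik_\lambda^0)\|_p\le C_p\|a\|_\infty\|k_\lambda^0\|_p$, where $C_p=\|P_+\|_{L^p\to L^p}$ and $\|Ik_\lambda^0\|_p=\|k_\lambda^0\|_p$, I obtain the upper estimate $\|T_{\bar a}k_\lambda\|_p\le\bigl(|a(\lambda)|+C_p\|a\|_\infty|I(\lambda)|\bigr)\|k_\lambda^0\|_p$, while the triangle inequality gives the matching lower estimate $\|k_\lambda\|_p\ge(1-|I(\lambda)|)\|k_\lambda^0\|_p$. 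Choosing $\lambda_n$ with $|a(\lambda_n)|+|I(\lambda_n)|\to0$, the ratio $\|T_{\bar a}k_{\lambda_n}\|_p/\|k_{\lambda_n}\|_p\to0$, contradicting the lower bound $\|T_{\bar a}f\|_p\ge\|(T_{\bar a}|_{K_I^p})^{-1}\|^{-1}\|f\|_p$ forced by invertibility.

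The sufficiency direction is essentially formal once the corona theorem is in hand; the only care needed is to confirm the Toeplitz product identities and the boundedness of all operators in the $H^p$ rather than $H^2$ setting, both of which rest on the $L^p$-boundedness of the Riesz projection for $1<p<\infty$. I expect the main obstacle to be the necessity direction, where the clean Hilbert-space route through reproducing kernels and orthogonality is unavailable: here $k_\lambda$ is \emph{not} an eigenvector of $T_{\bar a}|_{K_I^p}$, so the argument cannot rest on an exact spectral identity and must instead control $\|T_{\bar a}k_\lambda\|_p$ and $\|k_\lambda\|_p$ from the two opposite sides uniformly in $\lambda$, using only the constant $C_p$ and the unimodularity of $I$ on $\T$. (An alternative that also fits the paper's toolkit would be to pass to the Banach-space adjoint $A^\ast=P_IM_a|_{K_I^q}$ under the identification $(K_I^p)^\ast\cong K_I^q$ furnished by the pairing and the self-adjointness of $P_I$, and run the same estimate there; I would keep the direct version above as the primary line.)
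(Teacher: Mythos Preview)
Your sufficiency argument is essentially the paper's: both invoke Carleson's corona theorem to obtain a Bezout identity $au+Iv=1$ and then verify that $T_{\bar u}|_{K_I^p}$ is a two-sided inverse for $T_{\bar a}|_{K_I^p}$ by checking that the co-analytic Toeplitz operator with symbol $\overline{Iv}$ (equivalently $T_{\bar I}T_{\bar v}$) annihilates $K_I^p$.

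Your necessity argument is correct but takes a genuinely different, more elementary route. The paper chooses as test functions $f_n=P_I(I_nk_n)$, where $b_nI_n=I-I(z_n)$ and $k_n$ is the $L^p$-normalized Cauchy kernel at $z_n$; bounding $\|f_n\|_{H^p}$ away from zero requires an explicit computation with $P_{-}$, the asymptotics $\|(1-\bar z_n z)^{-1}\|_p\sim(1-|z_n|^2)^{-1/q}$, and a limit $\|P_{-}((|I_n|^2-1)\bar b_nk_n)\|_{L^p}\to 0$ quoted from Peller, while driving $\|T_{\bar a}f_n\|_{H^p}\to 0$ is carried out through a duality computation pairing against arbitrary $g\in L^q$. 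You instead use the model-space kernels $k_\lambda=(1-\overline{I(\lambda)}I)/(1-\bar\lambda z)$, which coincide with $P_Ik_\lambda^0$ and lie in $K_I^p$ by inspection; the exact eigenrelation $T_{\bar a}k_\lambda^0=\overline{a(\lambda)}\,k_\lambda^0$ then lets two triangle inequalities finish the job, with no integral asymptotics, no duality argument, and no external citation. The paper's approach follows the $p=2$ template in Peller more closely and may adapt to situations where the simple kernel eigenrelation is unavailable, but for the statement at hand your argument is shorter and fully self-contained.
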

\begin{proof}
Suppose first that functions $a$ and $I$  form a corona pair. We
show that the operator  $T_{\bar a}$ is invertible on $K^p_I$. By
the corona theorem there exist functions $u,v\in H^{\infty}$ such
that
\[
a(z)u(z)+I(z)v(z)=1, \quad z\in\D.
\]

Hence for $f\in H^p$
\begin{equation}
f=T_1f= T_{\bar{a}\,\bar{u}+\bar{I}\bar{v}}f= T_{\bar{
a}}T_{\bar{u}}f+ T_{\bar{I}}T_{\bar{v}}f.\label{corona}
\end{equation}
Note that if $f\in K_I^p$, then by (\ref{projection})
\[
T_{\bar{I}}T_{\bar{v}}f=T_{\bar{v}}T_{\bar{I}}f= T_{\bar{
v}}T_{\bar{I}}(IP_{-}(\bar{I}f ))=0.
\]
In view of (\ref{corona}) we get
\[
f=T_{\bar u}T_{\bar a}f= T_{\bar a}T_{\bar u}f,
\]
which  proves invertibility of $T_{\bar a}$ on $K^p_I$.

Assume now that the functions $a$ and $I$ do not form a corona pair.
Then there exists a sequence
 $\{z_n\}$ of points from  $\mathbb D$ such that
\[
\lim_{n\to\infty}(|I(z_n)|+|a(z_n)|)=0.
\]
 Put
\[
b_n(z)=\frac {z-z_n}{1-\overline {z}_n z}
\]
and define functions $I_n$ and $a_n$ by
\[
b_nI_n=I-I(z_n),\qquad b_na_n=a-a(z_n).
\]
Finally, let
\[
k_n(z) =\frac{(1-|z_n|^2)^{1/q}}{1-\bar{z}_n z},\quad \text{where} \
\ \frac{1}{p}+\frac{1}{q}=1.
\]
 We will also need the following asymptotic estimates for the  integral means (see, e.g. \cite[Thm.
 1.12]{Z}). For $z\in\mathbb D$,

\begin{equation}
\left(\int_0^{2\pi}\frac{d\theta}{|1-
ze^{-i\theta}|^p}\right)^{1/p}\sim\frac{1}{ (1-|z|^2)^{1/q}}\quad
\text{as } |z|\rightarrow 1^{-}.\label{norma}
\end{equation}
Let $f_n$ be defined by
\[
f_n=P_{I}I_nk_n.
\]
First we  show that $\|f_n\|_{H^p}\geqslant \gamma>0$ for  $n$ large 
enough and some constant  $\gamma>0$. We have
\begin{eqnarray*} \|f_n\|_{H^p}
&=&
\|P_{-}(\bar I I_nk_n)\|_{L^p}=\|P_{-}((\bar  I-\bar I(z_n))I_nk_n)\|_{L^p}\\
&=& \|P_{-}(\bar{I}_n \overline{b}_n)I_nk_n\|_{L^p}=\|P_{-}(|I_n|^2
\overline{b}_n k_n\|_{L^p}\\
& \geqslant& \|P_{-}( (\bar{b}_n k_n))\|_{L^p}- \|P_{-}((|I_n|^2-1)
\bar{b}_n k_n)\|_{L^p}.
\end{eqnarray*}
Since for $|z|=1$,
\[
P_{-}(\bar{b}_nk_n)(z) = (1-|z_n|^2)^{1/q}\frac
1{z-z_n}=(1-|z_n|^2)^{1/q}\frac{\bar{z}}{1-z_n\bar{
z}},\label{kernel}
\]
estimates (\ref{norma}) imply that there exists $c>0$ such that
$\|P_{-}( \bar{b}_n k_n)\|_{L^p}\geqslant c$.

Furthermore,  $ \|P_{-}((|I_n|^2-1) \bar{b}_nk_n)\|_{L^p}\to 0$ as
$n\to\infty$ (see \cite[p. 18]{P}).

We now  claim that $\|T_{\bar{a}}f_n\|_{H^p}\rightarrow 0$ as
$n\rightarrow\infty$.

It follows from the above  that $T_{\bar{a}}(K^p_I)\subset K^p_I$.
Moreover, since $IH^q$ is the annihilator of $K_I^p$, we get
\begin{eqnarray*}
\|T_{\bar a}f_n\|_{H^p}&=& \sup_{\substack{g\in L^q,\\
\|g\|=1}}|\langle T_{\bar a}f_n, g\rangle|= \sup_{\substack{g\in
L^q,\\ \|g\|=1}}|\langle
T_{\bar a}f_n, P_{+}g\rangle|\\
&=&  \sup_{\substack{g\in L^q,\\
\|g\|=1}}|\langle T_{\bar a}f_n, IP_{-}(\bar I P_{+}g)\rangle |= \sup_{\substack{g\in L^q,\\
\|g\|=1}}|\langle P_{+}(\bar af_n), IP_{-}(\bar I P_{+}g)\rangle |\\
&=& \sup_{\substack{g\in L^q,\\
\|g\|=1}}|\langle \bar af_n, IP_{-}(\bar I P_{+}g)\rangle |= \sup_{\substack{g\in L^q,\\
\|g\|=1}}|\langle \bar aP_{-}(k_nI_n\bar I), P_{-}(\bar I
P_{+}g)\rangle |.
\end{eqnarray*}
Since $b_nI_n=I-I(z_n)$, we have $I_n= \overline{b}_n(I-I(z_n))$.
Therefore
\[
\|T_{\bar{a}}f_n\|_{H^p}= \sup_{\substack{g\in L^q,\\
\|g\|=1}}|\langle \bar{a}(P_{-}(k_n\bar{b}_n)-
I(z_n)P_{-}(\bar{I}k_n\bar{b}_n)), P_{-}(\bar{I} P_{+}g)\rangle |.
\]
It is clear that
\[
\sup_{\substack{g\in L^q,\\
\|g\|=1}}|\langle \bar{a} I(z_n)P_{-}(\bar{I}k_n\bar{b}_n),
P_{-}(\bar I P_{+}g\rangle |\rightarrow 0 \quad \text{as} \ \
n\to\infty.
\]
Moreover,
\begin{eqnarray*}
\sup_{\substack{g\in L^q,\\
\|g\|=1}}|\langle \bar{a} P_{-}(k_n\bar{b}_n) , P_{-}(\bar{I}
P_{+}g)\rangle |&=&\sup_{\substack{g\in L^q,\\
\|g\|=1}} (1-|z_n|)^{1/q}\left|\left\langle \frac {\bar{a}\bar
{z}}{1-z_n\bar{z}}
, P_{-}(\bar{I} P_{+}g)\right\rangle \right|\\
&=& \sup_{\substack{g\in L^q,\\
\|g\|=1}} (1-|z_n|)^{1/q}\left|\left\langle \frac {\bar{a}\bar
{z}}{1-z_n\bar z} , \bar{g}_0\right\rangle\right |,
\end{eqnarray*}
where $g_0=P_{-}(\bar{I} P_{+}g)\in H^q_0$.

Since
\[
 \left\langle \frac {\bar{a}\bar{z}}{1-z_n\bar{z}} ,
\bar{g}_0\right\rangle =\left\langle \bar{a} \bar{z}g_0, \frac {
1}{1-z\bar{z}_n}\right\rangle= P_{+}(\bar{a}\bar{z}g_0)(z_n)
\]
and
\[
|P_{+}(\bar{a} \bar{z}g_0)(z_n)|
=o\left((1-|z_n|^2)^{-1/q}\right)\quad \text{as} \ \  |z_n|\to 1,
\]
our claim follows.
\end{proof}

\section{The commutant of the restricted backward shift}

Let us recall that the commutant $\{S^\ast\}'$ of the backward shift
consists of all bounded operators $A$ on $H^p$, $1<p<\infty$,
commuting with $S^\ast$, i.e.
\[
\{S^\ast\}'=\{A\in\mathcal{L}(H^p)\colon\,AS^\ast=S^\ast A\}.
\]
It is well-known that (see e.g. \cite[pp. 109--110]{CR})
\[
\{S^\ast\}'=\{T_{\bar{\varphi}}\colon\,\varphi\in H^\infty\}.
\]

Here we describe $\{S^\ast | K^p_I \}'$ the commutant  of the
restricted backward shift operator $S^\ast$ to the subspace $K^p_I$,
$1<p<\infty$. For $p=2$ the commutant of this operator was characterized
by Sarason \cite{S1}. The result says
\[
\{S^\ast| K^2_I\}'=\{T_{\bar{\varphi}}\,| K^2_I\colon\,\varphi\in
H^\infty\}.
\]

We will extend this result to the  subspaces $K^p_I$, $1<p<\infty$,
using an approach analogous to that suggested by N. K. Nikolskii for
$p=2$ \cite[pp. 179--182]{N1} (see also \cite[pp.13--15 ]{P}). To
this end, we first define Hankel operators on $H^p$, $1<p<\infty$.

 For $\psi\in L^{p}(\mathbb T)$, $1\leqslant p<\infty$,  we
define the Hankel operator $H_{\psi}$ on the dense subset of $H^p$
(e.g. $H^{\infty}$ or analytic polynomials) by
\[
H_{\psi}f= \psi f-P_{+}(\psi f)=P_{-}(\psi f).
\]

It is easy to see that $H_{\psi}$ is bounded on $H^p$ if the
function $\psi\in L^\infty(\T)$.

For $n\in \Z$ let  $\chi_n(z)= z^n$, $z\in {\T}$. The functions
$\chi_n$, $n\in \Z$, form a Schauder basis for the space $L^p(\T)$.
Let $\mathcal{S}$ denote the bilateral shift on $L^p(\mathbb{T})$,
i.e. ${\mathcal S}\chi_n=\chi_{n+1}$ for $n\in\Z$ and $S={\mathcal
S}|H^p$ be the unilateral shift on $H^p$.

The next theorem contains the known  characterizations of Hankel
operators on $H^p$, $1<p<\infty$. We include its proof  for the
convenience of the reader. We note that a version of this theorem
can be found in \cite[pp. 54--55, Thm. 2.11]{BS}.

\begin{thm} The following statements are equivalent\/{\rm:}
\begin{enumerate}
\item[(i)]
$A\colon\,H^p\to \overline{H^p_0}$ is a bounded linear operator such
that $\langle A\chi_k, \bar{\chi}_j\rangle =a_{j+k}$ for $j\geqslant
1, k\geqslant 0$\/{\rm;}
\item [(ii)]
there exists $\psi\in L^{\infty}(\mathbb{T})$ such that
$A=H_{\psi}$\/{\rm;}
\item [(iii)] $A$ is a bounded linear operator from $H^p$ into $\overline{H^p_0}$ such that
$P_{-}(\mathcal{S}A)=AS$.
\end{enumerate}
\end{thm}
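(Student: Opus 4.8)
The plan is to prove the cycle (ii) $\Rightarrow$ (iii) $\Rightarrow$ (i) $\Rightarrow$ (ii). In the first two implications the operator-theoretic part (boundedness $H^p\to\overline{H^p_0}$) is already granted, so only a Fourier-coefficient computation is needed; the substance of the theorem lies in (i) $\Rightarrow$ (ii), which is a Nehari-type statement recovering an $L^\infty$ symbol from the mere boundedness of a Hankel-structured operator.

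For (ii) $\Rightarrow$ (iii) I would expand both sides on Fourier series. Writing $Sf=\chi_1 f$ and $\mathcal Su=\chi_1u$, for $A=H_\psi$ one has $ASf=P_{-}(\chi_1\psi f)$ and $P_{-}(\mathcal SAf)=P_{-}(\chi_1P_{-}(\psi f))$; these agree because the only terms in which $\chi_1\psi f$ and $\chi_1P_{-}(\psi f)$ differ have non-negative frequency and are annihilated by the outer $P_{-}$. For (iii) $\Rightarrow$ (i), I apply the intertwining relation to $\chi_k$: since $S\chi_k=\chi_{k+1}$ and $\mathcal SA\chi_k=\chi_1A\chi_k$, the hypothesis gives $A\chi_{k+1}=P_{-}(\chi_1A\chi_k)$. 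Pairing with $\bar\chi_j$ for $j\geqslant 1$, and using both $\langle P_{-}u,\bar\chi_j\rangle=\langle u,\bar\chi_j\rangle$ (as $\langle P_{+}u,\bar\chi_j\rangle=0$ for $j\geqslant1$) and $\langle\chi_1u,\bar\chi_j\rangle=\langle u,\bar\chi_{j+1}\rangle$, yields $\langle A\chi_{k+1},\bar\chi_j\rangle=\langle A\chi_k,\bar\chi_{j+1}\rangle$. In the notation $a_{j,k}=\langle A\chi_k,\bar\chi_j\rangle$ this is $a_{j,k+1}=a_{j+1,k}$; chaining it along an anti-diagonal shows $a_{j,k}$ depends only on $j+k$, which is exactly (i).

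The hard part will be (i) $\Rightarrow$ (ii). I would introduce a linear functional $\Lambda$ on the analytic polynomials in $H^1_0$ by $\Lambda(F)=\sum_{n\geqslant1}a_n\hat F(n)$, and record the key identity $\langle Ag,\bar h\rangle=\Lambda(gh)$ for $g\in H^p$, $h\in H^q_0$ with $\tfrac1p+\tfrac1q=1$: since the coefficient of $z^{-j}$ in $Ag\in\overline{H^p_0}$ equals $\sum_{k\geqslant0}\hat g(k)a_{j+k}$, summing against $h$ reconstructs $\sum_{n\geqslant1}a_n\widehat{gh}(n)$, the condition $\hat h(0)=0$ ensuring the spurious $j=0$ term disappears. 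Given $F\in H^1_0$, I would write $F=\chi_1F_1$ and factor $F_1$ by inner--outer factorization, splitting the outer part $O$ of $F_1$ as $O^{1/p}\!\cdot O^{1/q}$ and attaching the inner factor and $\chi_1$ appropriately, to obtain $F=gh$ with $g\in H^p$, $h\in H^q_0$ and $\|g\|_{H^p}\|h\|_{H^q}=\|F\|_{H^1}$. This gives
\[
|\Lambda(F)|=|\langle Ag,\bar h\rangle|\leqslant\|Ag\|_{L^p}\|h\|_{L^q}\leqslant\|A\|\,\|F\|_{H^1},
\]
so $\Lambda$ extends to a bounded functional on $H^1_0$, hence by Hahn--Banach to $L^1(\T)$; representing the extension through the duality $(L^1(\T))^\ast=L^\infty(\T)$ produces $\psi\in L^\infty(\T)$ with $\|\psi\|_\infty\leqslant\|A\|$ and $\hat\psi(-n)=a_n$ for $n\geqslant1$. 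A direct computation then shows $\langle H_\psi\chi_k,\bar\chi_j\rangle=\hat\psi(-(j+k))=a_{j+k}$, so $A\chi_k$ and $H_\psi\chi_k$ have identical Fourier coefficients and therefore coincide for every $k$; density of the polynomials and boundedness force $A=H_\psi$. The genuine obstacle is precisely this passage from boundedness to a symbol: it rests on the factorization $H^1=H^p\cdot H^q$ with norm control together with the $L^\infty$--$L^1$ duality, and it is there that the quantitative estimate $\|\psi\|_\infty\leqslant\|A\|$ is obtained.
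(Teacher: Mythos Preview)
Your proof is correct and follows essentially the same route as the paper's: the same cycle of implications, and for (i)$\Rightarrow$(ii) the same reduction to a bounded functional on $H^1_0$ via the identity $\langle Ag,\bar h\rangle=\langle A\chi_0,\overline{gh}\rangle$ (your $\Lambda(gh)$), followed by Hahn--Banach and $L^1$--$L^\infty$ duality. The only cosmetic difference is that you spell out the inner--outer factorization yielding $H^1_0=H^p\cdot H^q_0$ with norm equality, whereas the paper leaves this implicit in the inequality $\sup\{|\langle A\chi_0,\bar f\rangle|:f\in H^1_0,\ \|f\|\leqslant1\}\leqslant\sup\{|\langle Ag,\bar h\rangle|:\|g\|_{H^p}\leqslant1,\ \|h\|_{H^q}\leqslant1\}$.
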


\begin{proof}
(i)\ $\Rightarrow$\ (ii): By assumption the equality
\begin{equation}
\langle Ag,\bar{h}\rangle=\langle A\chi_0,\bar{g}\bar{h}\rangle.
\label{chi}
\end{equation}
holds for any polynomials  $g$ and $h$, $h(0)=0$. Since polynomials
are dense in $H^p$, the above equality is true for all  $g\in H^p$
and $h\in{H^q_0}$. Consequently,
\[
\vert\langle Ag,\bar{h}\rangle\vert\leqslant\Vert Ag\Vert_{L^p}\Vert
h\Vert_{H^q}\leqslant \Vert A\Vert \Vert g\Vert_{H^p}\Vert
h\Vert_{H^q}
\]
and
\begin{eqnarray*}
&&{}\sup\{\vert\langle A\chi_0,\bar{f}\rangle\vert\colon\,f\in
H^1_0, \ \Vert f \Vert_{H^1}\leqslant 1\} \\ \noalign {\smallskip}
&&\makebox[.6cm]{}\leqslant\sup\{\vert\langle
Ag,\bar{h}\rangle\vert\colon\,g\in H^p, \ \Vert
g\Vert_{H^p}\leqslant 1, \ h\in H^q_0, \ \Vert h\Vert_{H^q}\leqslant
1\}\leqslant\Vert A\Vert.
\end{eqnarray*}

This means that $\Phi\colon\,H^1_0\rightarrow{\C}$ defined by
\[
\Phi(f)=\langle
A\chi_0,\bar{f}\rangle=\frac{1}{2\pi}\int_0^{2\pi}(A\chi_0)(e^{i\theta})f(e^{i\theta})\,d\theta
\]
is a bounded linear functional on $H^1_0$. By the Hahn-Banach
theorem this functional can be extended to a bounded functional on
$L^1(\T)$. Hence there exists $\psi\in L^{\infty}(\T)$ such that
\[
\Phi(f)=\frac{1}{2\pi}\int_0^{2\pi}\psi(e^{i\theta})f(e^{i\theta})\,d\theta
\]
for any  $f\in L^1(\T)$.

Therefore if   $f=gh$, where $g\in H^p$ and $h\in H^q_0$, then
\[
\Phi(gh)=\langle A\chi_0,\bar{g}\bar{h}\rangle
=\frac{1}{2\pi}\int_0^{2\pi}\psi(e^{i\theta})g(e^{i\theta})h(e^{i\theta})\,d\theta,
\]
or equivalently by (\ref{chi}),
\begin{equation}
\langle Ag,\bar{h}\rangle=\langle\psi g,\bar{h}\rangle.\label{Ag}
\end{equation}
Taking $h=\chi_n$, $n\geqslant 1$, in (\ref{Ag}) yields
\[
\langle Ag,\bar{\chi}_n\rangle=\langle\psi g,\bar{\chi}_n\rangle
\quad(n\geqslant 1).
\]
Since $Ag\in \overline{H^p_0}$, the last equality implies that  for
$g\in H^p$,
\[
Ag=P_{-}(\psi g)=H_\psi g,
\]
or, in other words, $A=H_\psi$.

\medskip

(ii)\ $\Rightarrow$\ (iii): If $A=H_\psi$ with $\psi\in
L^\infty(\T)$, then for $f\in H^p$,
\begin{eqnarray*}
P_{-}(z\psi f)&=&P_{-}(z(P_{+}(\psi f)+P_{-}(\psi f)))\\
&=&P_{-}(zP_{+}(\psi f))+P_{-}(zP_{-}(\psi f))=P_{-}(zP_{-}(\psi f)).
\end{eqnarray*}
 Thus
\[
P_{-}(zH_\psi f)=P_{-}(zP_{-}(\psi f))=P_{-}(z\psi f)=H_\psi(zf),
\]
or
\[
P_{-}(\mathcal{S}Af)=P_{-}(z\psi f)=H_\psi(zf)=(H_\psi S)f=(AS)f.
\]

\medskip

(iii)\ $\Rightarrow$\ (i): Assume now that a bounded operator
$A\colon\,H^p\rightarrow \overline{H^p_0}$ satisfies
$P_{-}(\mathcal{S}A)=AS$. Then for $j\geqslant 1$ and $k\geqslant
1$,
\begin{eqnarray*}
\langle A\chi_k,\bar{\chi}_j\rangle&=&\langle
AS\chi_{k-1},\bar{\chi}_j\rangle=
\langle P_{-}(\mathcal{S}A)\chi_{k-1},\overline{\chi}_j\rangle \\
&=& \langle \mathcal{S}A\chi_{k-1},\bar{\chi}_j\rangle=\langle
A\chi_{k-1},\bar{\chi}_{j+1}\rangle,
\end{eqnarray*}
since $\langle P_{+}(\mathcal{S}A)\chi_{k-1},\bar{\chi}_j\rangle=0$.
This implies that the operator $A$ is represented by the Hankel
matrix with respect to the basis $\{\chi_k\}_{k\geqslant 0}$ in
$H^p$ and $\{\bar{\chi}_j\}_{j\geqslant 1}$ in $\overline{H^p_0}$.

\end{proof}

Let $S^I$ be the compression of the shift $S$ on $H^p$ to the
subspace $K^p_I$, i.e.
\[
S^If= P_{I}zf,
\]
for  $f\in K_I^p$. Also let for $\varphi\in H^{\infty}$ the operator
$T^I_{\varphi}f= P_I(T_{\varphi}f)$ be the compression of the
Toeplitz operator $T_{\varphi}$ on $H^p$ to $K_I^p$.

Now our aim is to prove the following
\begin{thm} If $T$ is a bounded linear operator on
$K_I^p$, $1<p<\infty,$ that commutes with the operator  $S^I$, then
there exists $\varphi\in H^{\infty}$ such that $T= T_{\varphi}^I$.
\end{thm}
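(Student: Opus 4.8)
The plan is to deduce the statement from the description of Hankel operators in Theorem 4.1: following Nikolskii, I would attach to $T$ an operator $A$ on $H^p$ that turns out to be Hankel, and then recover the symbol $\varphi$ from the symbol of $A$. Concretely, introduce $A\colon H^p\to\overline{H^p_0}$ by
\[
Af=\bar{I}\,T(P_If).
\]
Since $T(P_If)\in K^p_I=H^p\cap I\overline{H^p_0}$, multiplication by $\bar{I}$ carries it into $\overline{H^p_0}$, so $A$ is well defined; it is bounded because $T$ and $P_I$ are bounded and multiplication by $\bar{I}$ is isometric on $L^p(\T)$.

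The key step is to check that $A$ satisfies condition (iii) of Theorem 4.1, that is, $P_{-}(\mathcal{S}A)=AS$. I would first record the identity $P_I(zf)=S^I(P_If)$, valid for every $f\in H^p$: writing $f=P_If+Ih$ according to the decomposition $H^p=IH^p\oplus K^p_I$ of Remark 2.3 and using $P_I(Ih)=0$ (which follows from (\ref{projection})), one gets $P_I(zf)=P_I(zP_If)=S^I(P_If)$. Combining this with the hypothesis $TS^I=S^IT$ and with $S^Iu=P_I(zu)=IP_{-}(\bar{I}zu)$ together with $\bar{I}I=1$ on $\T$, a short computation gives, for $f\in H^p$,
\[
ASf=\bar{I}\,T\big(P_I(zf)\big)=\bar{I}\,T\big(S^IP_If\big)=\bar{I}\,S^I\big(TP_If\big)=P_{-}\big(z\,\bar{I}\,TP_If\big)=P_{-}(\mathcal{S}Af).
\]
By Theorem 4.1 there is therefore some $\psi\in L^\infty(\T)$ with $A=H_\psi$.

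It remains to identify $\varphi$. Because $P_I$ annihilates $IH^p$, we have $A(Ih)=\bar{I}\,T(P_I(Ih))=0$ for all $h\in H^p$; taking $h=\chi_0$ yields $P_{-}(I\psi)=0$, so $\varphi:=I\psi$ is a bounded function with no negative Fourier coefficients, i.e. $\varphi\in H^\infty$, and $\psi=\bar{I}\varphi$ on $\T$. Then for $f\in K^p_I$, where $P_If=f$, we conclude
\[
Tf=I\,Af=IP_{-}(\psi f)=IP_{-}(\bar{I}\varphi f)=P_I(\varphi f)=T^I_\varphi f,
\]
using (\ref{projection}) in the last two equalities; this is precisely the assertion.

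I expect the one genuinely delicate point to be the verification of condition (iii): translating the commutation relation $TS^I=S^IT$ into the Hankel relation $P_{-}(\mathcal{S}A)=AS$ requires the auxiliary identity $P_I(zf)=S^I(P_If)$ and careful use of the cancellation $\bar{I}I=1$ on $\T$. Once Theorem 4.1 is in hand, the remaining extraction of $\varphi$ from $\psi$ is routine.
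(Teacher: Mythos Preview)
Your proof is correct and follows essentially the same route as the paper: define $A=\bar I\,T P_I$, verify via Theorem~4.1(iii) that $A$ is Hankel (the paper isolates this step as Lemma~4.3), then extract $\varphi=I\psi\in H^\infty$ from the symbol $\psi$ of $A$ and conclude $T=T_\varphi^I$ using the formula $P_I g=IP_-(\bar I g)$. Your auxiliary identity $P_I(zf)=S^I(P_If)$ is a clean way to package the piece of the argument that the paper handles by splitting into $f\in K_I^p$ and $f\in IH^p$ separately, but the substance is the same.
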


We first prove the following lemma (cf. \cite[p. 181]{N1},
\cite[Lemma 2.2, p. 14]{P}).
\begin{lem}  Let $T$ be a bounded linear operator on $K_I^p$, $1<p<\infty,$ and let
an operator $A\colon\,H^p\rightarrow \overline{H^p_0}$ be given by
\[
 Af= \bar{I}TP_{I}f.
 \]
Then $T$ commutes with $S^I$ if and only if $A$ is a Hankel
operator.
\end{lem}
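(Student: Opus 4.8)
The plan is to run everything through the characterization of Hankel operators from Theorem 4.1, using the equivalence of (ii) and (iii): the operator $A$ is a Hankel operator if and only if $P_{-}(\mathcal{S}A)=AS$, that is,
\[
P_{-}(zAf)=A(zf)\qquad(f\in H^p).
\]
So the whole lemma reduces to proving that this identity is equivalent to $TS^I=S^IT$. Before invoking Theorem 4.1 I would first note that $A$ is indeed a bounded operator from $H^p$ into $\overline{H^p_0}$: since $P_If\in K^p_I\subset I\overline{H^p_0}$, multiplying by $\bar I$ lands in $\overline{H^p_0}$, and $P_I$ (bounded by Proposition 2.1), $T$, and multiplication by $\bar I$ are all bounded on the relevant $L^p$ spaces.

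The second step is to reduce the Hankel identity from ``for all $f\in H^p$'' to a statement on $K^p_I$. Because $A$ factors as multiplication by $\bar I$ composed with $T$ composed with $P_I$, the left-hand side $P_{-}(zAf)=P_{-}(z\bar I TP_If)$ depends only on $g:=P_If$. For the right-hand side I would use the decomposition $H^p=IH^p\oplus K^p_I$ from Remark 2.3: writing $f=P_If+(f-P_If)$ with $f-P_If\in IH^p$, and using $z\cdot IH^p\subset IH^p$ together with $P_I|_{IH^p}=0$, one gets $P_I(zf)=P_I(zP_If)$, so that
\[
A(zf)=\bar I\,TP_I(zg)=\bar I\,TS^Ig
\]
also depends only on $g$. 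As $f$ ranges over $H^p$, $g=P_If$ ranges over all of $K^p_I$, so the Hankel condition is equivalent to
\[
P_{-}(z\bar I\,Tg)=\bar I\,TS^Ig\qquad(g\in K^p_I).
\]

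The heart of the argument is the identity $P_{-}(z\bar I h)=\bar I\,S^Ih$ for $h\in K^p_I$, which follows directly from $S^Ih=P_I(zh)=IP_{-}(\bar I zh)$ via (\ref{projection}) upon multiplying by $\bar I$. Applying it with $h=Tg\in K^p_I$ rewrites the left-hand side above as $\bar I\,S^ITg$, so the displayed condition becomes $\bar I\,S^ITg=\bar I\,TS^Ig$ for all $g\in K^p_I$. Since $|I|=1$ a.e.\ on $\T$, multiplication by $\bar I$ is injective, and this is precisely $S^ITg=TS^Ig$, i.e.\ $TS^I=S^IT$. Reading the chain of equivalences in both directions proves the lemma.

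I expect the main obstacle to be the reduction from ``for all $f\in H^p$'' to ``for all $g\in K^p_I$'': this is exactly where the splitting $H^p=IH^p\oplus K^p_I$ and the invariance $z\cdot IH^p\subset IH^p$ are essential, and where one must be careful that both sides of the Hankel identity genuinely depend only on $P_If$. Once that reduction is in place, the computation collapses to the single bookkeeping identity $P_{-}(z\bar I h)=\bar I\,S^Ih$ and the injectivity of multiplication by $\bar I$.
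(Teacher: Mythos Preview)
Your proposal is correct and follows essentially the same route as the paper: both use the characterization from Theorem~4.1 that $A$ is Hankel iff $P_{-}(zAf)=A(zf)$, then reduce to $K^p_I$ via the splitting $H^p=IH^p\oplus K^p_I$ and the invariance $z\cdot IH^p\subset IH^p$, and finally identify the resulting condition with $S^IT=TS^I$ using the projection formula $P_Ih=IP_{-}(\bar I h)$. Your organization is slightly cleaner in that you explicitly verify $A$ maps into $\overline{H^p_0}$ and isolate the key identity $P_{-}(z\bar I h)=\bar I\,S^Ih$, but the substance is the same.
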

\begin{proof}
In view of Theorem 4.1, $A$ is a Hankel operator if and only if
\begin{equation}
P_{-}(zAf)= Azf,\quad f\in H^p,\label{H1}
\end{equation}
thus
\begin{equation}
P_{-}(z\bar{I}TP_{I}f)=\bar ITP_{I}zf, \quad f\in H^p,\label{H2}
\end{equation}
or equivalently
\begin{equation}
IP_{-}(z\bar{I}TP_{I}f)=TP_{I}zf, \quad f\in H^p.\label{H3}
\end{equation}
By formula (\ref{projection}) we have
\[
IP_{-}(z\bar{I}TP_{I}f)= P_IzTP_{I}f=S^ITP_If.
\]
Now observe that for  $f\in IH^p$ the left-hand side of the last
equality equals to zero since $P_If=0$. Hence  it follows from
(\ref{H3}) that $TP_{I}zf=0$ for $f\in IH^p$. Consequently,
\[ I
P_{-}(z\bar{I}TP_{I}f)=TS^If
\]
for  $f\in K^p_I$.

Finally, note that for $f\in K^p_I$,
\[
IP_{-}(z\bar{I}TP_{I}f)=IP_{-}z\bar{I}Tf=P_IzTf=S^ITf
\]
since $P_If=f$. Therefore equality  (\ref{H1}) (or (\ref{H2})) is
equivalent to
\[
S^ITf=TS^If,\quad f\in K^p_I.
\]
\end{proof}

\noindent {\em Proof of Theorem 4.2.}  \ If an operator $T$
satisfies assumptions of Theorem 4.2, then the operator $A$ from
Lemma 4.3 is a bounded Hankel operator. In view of Theorem 4.1 there
exists $\psi\in L^{\infty} $ such that $Af=H_{\psi}$. Thus \[
 Af=\bar{I}TP_{I}f= H_{\psi}f=P_{-}\psi f
\]
for $f\in H^p$. As in the proof of Lemma 4.3, this equality implies
$P_{-}\psi f=0$ for $f\in IH^p$, and so
\[
P_{-}\psi I
f=H_{\psi I}f=0\quad \text{for}\ \ f\in H^p.
\]
Put $\varphi=\psi I$. Then $\varphi\in H^{\infty}$, $\psi=\varphi
\bar{I}$ and for $f\in K^p_I$ (i. e. for $f$ such that $P_If=f$)
\[
\bar{I}Tf= P_{-}\bar{I}\varphi f,
\]
and so
\[
 Tf=I P_{-}\bar{I}\varphi f=P_I\varphi f= T_{\varphi}^If.
\]
{}\hfill$\square$

\medskip

We know that under pairing (\ref{pairing}), the dual space of
$K^p_I$ can be identified with $K^q_I$ (see e.g. \cite[p. 109]{CR}).
Moreover, the adjoint of the compression of the shift operator $S$
to $K_I^p$, i.e. $\left(P_IS|K_I^p\right)^\ast$ is equal to
$T_{\bar{z}}|K_I^q$. Similarly, for $\varphi\in H^\infty$, the
adjoint of the compression of $T_\varphi$ to $K^p_I$, i.e.
$\left(P_IT_\varphi|K_I^p\right)^\ast$ is equal to
$T_{\bar{\varphi}}|K_I^q$.

Theorem 4.2 implies the following

\begin{cor}
For $1<p<\infty$,
\[
\{S^\ast| K^p_I\}'=\{T_{\bar{\varphi}}\,| K^p_I\colon\,\varphi\in
H^\infty\}.
\]
\end{cor}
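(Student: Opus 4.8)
The plan is to derive the corollary from Theorem 4.2 by a duality argument, using the adjoint identifications recorded in the paragraph just before the statement together with the reflexivity of $K^p_I$ for $1<p<\infty$. No new analytic estimate should be needed; all the substantive content is carried by Theorem 4.2 applied with the conjugate exponent.

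The inclusion $\supseteq$ is the routine direction. For $\varphi\in H^\infty$ the membership $T_{\bar\varphi}\in\{S^\ast\}'$ means precisely that $T_{\bar\varphi}$ commutes with $S^\ast=T_{\bar z}$ on $H^p$. Since both $T_{\bar\varphi}$ and $T_{\bar z}$ map $K^p_I$ into itself (this is the inclusion $T_{\bar a}(K^p_I)\subset K^p_I$ for $a\in H^\infty$ established at the beginning of Section 3), their restrictions commute on $K^p_I$, so $T_{\bar\varphi}|K^p_I\in\{S^\ast|K^p_I\}'$.

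For the inclusion $\subseteq$, let $T$ be a bounded operator on $K^p_I$ commuting with $S^\ast|K^p_I=T_{\bar z}|K^p_I$. I would first identify $(S^\ast|K^p_I)^\ast$. Swapping the roles of $p$ and $q$ in the stated formula $(P_IS|K^p_I)^\ast=T_{\bar z}|K^q_I$ gives $(P_IS|K^q_I)^\ast=T_{\bar z}|K^p_I=S^\ast|K^p_I$; since $K^q_I$ is reflexive, taking adjoints once more yields $(S^\ast|K^p_I)^\ast=P_IS|K^q_I=S^I$, the compressed forward shift on $K^q_I$. Passing to adjoints, the hypothesis $T(S^\ast|K^p_I)=(S^\ast|K^p_I)T$ becomes $T^\ast S^I=S^IT^\ast$ on $K^q_I$; that is, $T^\ast$ commutes with the compressed shift $S^I$ on $K^q_I$.

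Now I would apply Theorem 4.2 with the conjugate exponent $q$ in place of $p$ (admissible since $1<q<\infty$): there exists $\varphi\in H^\infty$ with $T^\ast=T^I_\varphi=P_IT_\varphi|K^q_I$ on $K^q_I$. Taking adjoints back and using the stated identity $(P_IT_\varphi|K^q_I)^\ast=T_{\bar\varphi}|K^p_I$ (again with $p$ and $q$ interchanged), reflexivity gives $T=(T^\ast)^\ast=T_{\bar\varphi}|K^p_I$, which is the desired form. I expect the only delicate point to be the bookkeeping of which conjugate exponent each space and each adjoint lives on; once that is kept straight, the result is immediate from Theorem 4.2.
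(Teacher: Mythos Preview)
Your proposal is correct and is precisely the argument the paper has in mind: the paper records the adjoint identifications $(P_IS|K^p_I)^\ast=T_{\bar z}|K^q_I$ and $(P_IT_\varphi|K^p_I)^\ast=T_{\bar\varphi}|K^q_I$ and then simply asserts that Theorem~4.2 implies the corollary, leaving the duality-and-reflexivity step you wrote out as an exercise. Your handling of the exponent swap and the double-adjoint is exactly what is required.
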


\vspace{.2in}

{\bf Final Remark.}  After this paper was submitted for publication
we learned from A. Hartmann that Theorem 4.2 had been proved in his
paper \cite[Th\'eor\`eme 1.14]{H1}. He observed that the
$H^\infty$-functional calculus of B.\,Sz.-Nagy and Foia\c{s} for the
compressed shift operator to $K^p_I$, $1<p<\infty$, can be defined
exactly in the same way as in the case $p=2$. Therefore, using the
same reasoning, Sarason's theorem on the commutant of the compressed
shift can be extended to the non-Hilbertian case.

Furthermore, the proof of \cite[Lemma 4.9, p. 1270--1271]{FHR} can be carried out in the case $1<p<\infty$ by means of this generalized $H^\infty$-functional calculus for the compressed shift operator.

The advantage of our  presentation is that the proofs of Theorems
3.2 and 4.2 are direct. We use neither the functional calculus, nor
the commutant lifting theorem.

\vspace{.2in}

\noindent
{\bf Acknowledgement.} The second named author would like to thank the Institute of Mathematics of the Maria Curie-Sk{\l}odowska University for supporting his visit to Lublin where part of this paper was written.

\vspace{.4in}


\begin{thebibliography}{99999}

\bibitem[BS]{BS}
A. B\"{o}ttcher and B. Silbermann, {\em Analysis of Toeplitz
Operators\/}, Springer Monographs in Mathematics, Springer-Verlag,
Berlin 1990.
\bibitem[CR]{CR}
J.\,A. Cima and W.\,T. Ross, {\em The Backward Shift on the Hardy
Space\/}, Mathematical Surveys and Monographs, Vol. 79, American
Mathematical Society, Providence, RI 2000.
\bibitem[Dy1]{Dy1} K.\,M. Dyakonov, {\em Factorization and non-factorization theorems for pseudocontinuable functions\/},
 Adv. Math. {\bf 320} (2017), 630–-651.
\bibitem[Dy2]{Dy2} K.\,M. Dyakonov, {\em A free interpolation problem for a subspace of $H^{\infty}$ \/}, Bull. Lond. Math. Soc. {\bf 50} (2018), no. 3, 477--486.
\bibitem[FHR]{FHR} E. Fricain, A. Hartmann, and W.\,T.  Ross, {\em Range spaces of co-analytic Toeplitz operators\/}, Canad. J. Math. {\bf 70} (2018), no. 6, 1261--1283.
\bibitem[F1]{F1}  P.\,A. Fuhrmann, {\em On the corona theorem and its application to spectral problems in Hilbert space \/}, Trans. Amer. Math. Soc. {\bf 132} (1968), 55--66.
\bibitem[F2]{F2}  P.\,A. Fuhrmann, {\em  Exact controllability and observability and realization theory in Hilbert space \/}, J. Math. Anal. Appl. {\bf 53} (1976), no. 2, 377--392.
\bibitem[GMR]{GMR} S.\,R. Garcia, J. Mashreghi, W.\,T. Ross, {\em
Introduction to model spaces and their operators\/},  Cambridge
University Press, Cambridge, 2016.
\bibitem[H1]{H1}
A. Hartmann, {\em Une approche de l'interpolation libre g\'en\'eralis\'ee par la th\'eorie des op\'erateurs et caract\'erisation des traces $H^p|_{\Lambda}$}, J. Operator Theory {\bf 35} (1996), no. 2, 281--316.
\bibitem[H2]{H2}
A. Hartmann, {\em The generalized Carleson condition in certain spaces of analytic functions\/}, pp. 245--260; in: Proceedings of the 13th International Conference on Banach Algebras held at the Heinrich Fabri Institute of the University of T\"{u}bingen in Blaubeuren, July 20--August 3, 1997, red. E. Albrecht, M. Mathieu, De Gruyter Proceedings in Mathematics, Berlin 1998.
\bibitem[HS]{HS}
A. Hartmann and K. Seip, {\em Extremal functions as divisors for
kernels of Toeplitz operators\/}, J. Funct. Anal. {\bf 202} (2003),
no. 2, 342–-362.
\bibitem[M]{M}
V. M\"{u}ller, {\em  Spectral theory of linear operators and
spectral systems in Banach algebras\/}, Operator Theory: Advances
and Applications, 139. Birkhäuser Verlag, Basel, 2007.
\bibitem[N1]{N1}
N.\,K. Nikolskii, {\em Treatise on the shift operator\/},
Springer-Verlag, New York 1986.
\bibitem[N2]{N2} N.\,K. Nikolskii, {\em Operators,
Functions, and Systems: An Easy Reading. Vol. I: Hardy, Hankel, and
Toeplitz\/}, Mathematical Surveys and Monographs, Vol. 92, American
Mathematical Society, Providence, RI 2002.
\bibitem[N3]{N3}
N.\,K. Nikolskii, {\em Operators, functions, and systems: an easy
reading. Vol. 2, Model operators and systems\/},  American
Mathematical Society, Providence, RI, 2002.
\bibitem[P]{P}
V.\,V. Peller, {\em Hankel Operators and Their Applications\/},
Springer Monographs in Mathematics, Springer-Verlag, New York 2003.
\bibitem[R]{R}
W. Rudin, {\em Functional Analysis\/}, McGraw Hill, Inc., New York
1973.
\bibitem[S1]{S1}
D. Sarason, {\em Generalized interpolation in $H^\infty$\/}, Trans.
Amer. Math. Soc. {\bf 127} (1967), 179--203.
\bibitem[St]{St}
D.\,A. Stegenga, {\em Bounded Toeplitz Operators on $H^1$ and
Applications of the Duality Between $H^1$ and the Functions of
Bounded Mean Oscillation\/}, Amer. J. Math. {\bf 98}, no. 3 (1976),
573--589.
\bibitem[SzNB]{SzNB}
B. Sz.-Nagy and  C. Foia\c{s}, {\em Harmonic analysis of operators on
Hilbert space\/},  North-Holland, Amsterdam 1970.
\bibitem[Z]{Z} K.Zhu, {\em Spaces of holomorphic functions in the unit ball\/},  Springer-Verlag, New York 2005.
\end{thebibliography}
\end{document}